\documentclass[reqno,12pt,letterpaper]{amsart}
\usepackage{amsmath,amssymb,amsthm,graphicx,mathrsfs,url}
\usepackage[usenames,dvipsnames]{color}
\usepackage[colorlinks=true,linkcolor=Red,citecolor=Green]{hyperref}

\def\?[#1]{\textbf{[#1]}\marginpar{\Large{\textbf{??}}}}
\setlength{\textheight}{8in} \setlength{\oddsidemargin}{0.0in}
\setlength{\evensidemargin}{0.0in} \setlength{\textwidth}{6.4in}
\setlength{\topmargin}{0.18in} \setlength{\headheight}{0.18in}
\setlength{\marginparwidth}{1.0in}
\setlength{\abovedisplayskip}{0.2in}
\setlength{\belowdisplayskip}{0.2in}
\setlength{\parskip}{0.05in}

\newtheorem{prop}{Proposition}
\newtheorem{thm}[prop]{Theorem}

\newtheorem{lem}[prop]{Lemma}

\newtheorem{rem}[prop]{Remark}

\numberwithin{equation}{section}
\numberwithin{prop}{section}

\DeclareMathOperator{\Op}{Op}

\DeclareMathOperator{\supp}{supp}

\begin{document}
\title{Control for Schr\"{o}dinger equation on hyperbolic surfaces}

\author{Long Jin}
\email{long249@purdue.edu}
\address{Department of Mathematics, Purdue University,
150 N. University St, West Lafayette, IL 47907}

\begin{abstract} 
We show that any nonempty open set on a hyperbolic surface provides
observability and control for the time dependent Schr\"odinger equation.
The only other manifolds for which this was previously known are flat tori  \cite{jaffard, haraux, komornik}. The proof is based on the main estimate in \cite{messupp} and standard arguments in control theory.
\end{abstract}

\maketitle

\section{Introduction}
Let $M$ be a compact (connected) hyperbolic surface and $\Delta$ the Laplace-Beltrami operator on $M$. In a recent paper with Dyatlov, \cite{messupp}, we prove the following semiclassical control result which roughly says that any open set in $S^\ast M$ controls the whole $S^\ast M$ in the $L^2$-sense.
\begin{thm}{\cite[Theorem 2]{messupp}}
\label{t:semi-control}
Assume that $a\in C_0^\infty(T^\ast M)$ and $a|_{S^\ast M}\not\equiv0$, then there exist constants $C$, $h_0>0$ only depending on $M$ and $a$ such that for all $0<h<h_0$ and $u\in H^2(M)$,
\begin{equation}
\label{e:semi-control}
\|u\|_{L^2(M)}\leq C\|\Op_h(a)u\|_{L^2(M)}+C\frac{\log(1/h)}{h}\|(-h^2\Delta-1)u\|_{L^2(M)}.
\end{equation}
\end{thm}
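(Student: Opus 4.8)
The plan is to prove the contrapositive by a semiclassical compactness/contradiction argument whose quantitative engine is a long-time (Ehrenfest) propagation estimate fed into a fractal uncertainty principle; it is the latter ingredient that singles out hyperbolic surfaces among negatively curved manifolds. Negating \eqref{e:semi-control} produces sequences $h_k\to0$ and $u_k\in H^2(M)$ with $\|u_k\|_{L^2}=1$, $\|\Op_{h_k}(a)u_k\|_{L^2}\to0$, and $\frac{\log(1/h_k)}{h_k}\|(-h_k^2\Delta-1)u_k\|_{L^2}\to0$. Writing $P=\sqrt{-h^2\Delta}$, with principal symbol $|\xi|_g$ and half-wave propagator $e^{itP/h}$ quantizing the unit-speed geodesic flow $\varphi_t$, the last bound gives $\|(P-1)u_k\|=o(h_k/\log(1/h_k))$, which is exactly sharp enough to microlocalize $u_k$ to an $O(h_k/\log(1/h_k))$ shell around $S^*M$ and to keep $e^{itP/h}u_k$ within $o(1)$ in $L^2$ of $e^{it/h_k}u_k$ for all $|t|\lesssim\log(1/h_k)$. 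Thus $u_k$ is essentially coherent under propagation up to the Ehrenfest scale.

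The dynamical content is a decomposition of the identity along the flow. Since $a|_{S^*M}\not\equiv0$, the set $\{a\neq0\}$ contains a nonempty open subset of $S^*M$. Peeling off, as $t$ ranges over $[0,T]$ with $T\sim2\log(1/h_k)/\Lambda$ (here $\Lambda=1$ is the Lyapunov exponent of the curvature-$(-1)$ flow), the part of the mass whose trajectory meets $\{a\neq0\}$, Egorov's theorem up to the Ehrenfest time identifies each such piece with $\Op_{h_k}(a\circ\varphi_{-t})$ modulo controlled errors, so all these pieces are dominated by $\|\Op_{h_k}(a)u_k\|\to0$. The remainder is microlocalized on the set $\Gamma_h$ of points whose entire forward and backward orbit of length $T$ avoids the observation region. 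Because the flow is Anosov and an open set is entered by a definite proportion of orbits at every scale, the projections of $\Gamma_h$ onto the unstable and stable horocyclic directions are \emph{porous on every scale down to $h_k$}: at each scale expansion forces a hole corresponding to orbits that do enter $\{a\neq0\}$. This uniform porosity — available in constant curvature because the stable/unstable foliations are smooth horocycle foliations conjugate to the boundary action of the Fuchsian group — is precisely the hypothesis demanded by the fractal uncertainty principle.

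I would then invoke the fractal uncertainty principle of Bourgain--Dyatlov: there exist $\beta=\beta(\text{porosity})>0$ and $C$ with $\|\mathbf{1}_X\mathcal{F}_h\mathbf{1}_Y\|_{L^2\to L^2}\le Ch^\beta$ whenever $X,Y$ are porous on $[h,1]$, where $\mathcal{F}_h$ is the semiclassical Fourier transform. In suitable horocyclic (Fermi) coordinates the forward propagation localizes $u_k$ in the unstable variable while the backward propagation localizes it in the dual stable variable, and these two variables are symplectically paired, hence exchanged by $\mathcal{F}_h$; the remainder operator therefore takes the normal form $\mathbf{1}_X\mathcal{F}_h\mathbf{1}_Y$ up to negligible errors, and the principle yields a gain $h_k^\beta$. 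The reason to propagate exactly to a fixed multiple of the Ehrenfest time is that this polynomial gain must defeat the loss accumulated over the $\sim\log(1/h_k)$ propagation steps — the very mechanism producing the weight $\frac{\log(1/h)}{h}$ in \eqref{e:semi-control}. Collecting the controlled pieces with the fractal-uncertainty remainder gives $1=\|u_k\|^2\le o(1)+Ch_k^\beta\to0$, the desired contradiction.

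The step I expect to be the genuine obstacle is the second one: converting the soft statement ``long orbits avoiding an open set are sparse'' into \emph{quantitative, uniform} porosity of $\Gamma_h$ in horocyclic coordinates, with constants independent of $h_k$ and compatible with the pseudodifferential calculus, and then executing the Ehrenfest-time propagation in the correct exotic symbol class $S_{\rho}$ so that the reduction to $\mathbf{1}_X\mathcal{F}_h\mathbf{1}_Y$ is legitimate. The fractal uncertainty principle itself I would treat as a black box, but feeding the dynamics into it — matching the scales of the stable/unstable decomposition to the range $[h,1]$ required by the principle — is where the constant curvature of $M$ enters essentially, and is the crux of the argument.
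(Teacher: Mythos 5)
You should first note a structural point: this paper does not prove Theorem \ref{t:semi-control} at all. It is imported verbatim from \cite{messupp}, and everything the present paper actually proves lies downstream of it (Proposition \ref{p:semi-observe} via the Burq--Zworski black-box argument, then Theorems \ref{t:observe} and \ref{t:control}). So your proposal can only be measured against the proof in the cited reference, and at the level of architecture it reconstructs that proof faithfully: reduction to approximate eigenfunctions in an $O(h/\log(1/h))$ spectral window, propagation up to a fixed multiple of the Ehrenfest time in mildly exotic symbol classes, decomposition into pieces whose orbits meet $\{a\neq 0\}$ (each controlled by $\|\Op_h(a)u\|$ plus Duhamel errors of size $(T/h)\|(-h^2\Delta-1)u\|$, which with $T\sim\log(1/h)$ is exactly the origin of the weight $\log(1/h)/h$), a remainder microsupported on trajectories avoiding an open set, porosity of that set along the stable/unstable horocyclic directions, and the Bourgain--Dyatlov fractal uncertainty principle as the quantitative engine. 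You also correctly locate where the real work is (quantitative porosity uniform in $h$, and the Egorov/exotic-calculus bookkeeping).

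Two concrete gaps remain between your roadmap and a proof. (1) The contradiction framing is lossy in a way that matters: the negation only gives $\|\Op_{h_k}(a)u_k\|\to 0$, while your ``peeling off'' produces on the order of $\log(1/h_k)$ controlled pieces (or exponentially many, if the product of partitions over $\sim\log(1/h)$ time steps is expanded naively), so the summed control term is at best $\log(1/h_k)\,\|\Op_{h_k}(a)u_k\|$, which your hypotheses do not force to vanish. The cited proof works directly and quantitatively, organizing the word decomposition (telescoping so that controlled terms appear once per time step, with ellipticity/division in the exotic calculus) precisely so that no stray $\log$ lands on the $\|\Op_h(a)u\|$ term; your sketch does not address this bookkeeping, and it is not cosmetic --- it is what makes the constant in front of $\|\Op_h(a)u\|$ an $h$-independent $C$. (2) The remainder does not literally reduce to the normal form $\mathbf{1}_X\mathcal{F}_h\mathbf{1}_Y$: what is actually proved and used in \cite{messupp} is an operator version of the fractal uncertainty principle --- the product of the backward- and forward-propagated cutoffs, microsupported on sets porous along the stable, respectively unstable, foliation, has norm $O(h^\beta)$ --- deduced from the one-dimensional Bourgain--Dyatlov statement through the smooth horocyclic structure you allude to. You flag both points as the crux rather than supplying them, which is honest; but as written the proposal is a correct high-level reconstruction of the proof in \cite{messupp}, not a self-contained proof, and within the present paper there is no internal proof for it to be compared against.
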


In this short notes, we show that Theorem \ref{t:semi-control} implies the following observability result of the Schr\"{o}dinger equation on $M$.
\begin{thm}
\label{t:observe}
Let $\Omega\subset M$ be any non-empty open set and $T>0$, then there exists a constant $K>0$ depending only on $M$, $\Omega$ and $T$, such that for any $u_0\in L^2(M)$, we have
\begin{equation}
\label{e:observe}
\|u_0\|_{L^2(M)}^2\leq K\int_0^T\|e^{it\Delta}u_0\|_{L^2(\Omega)}^2dt.
\end{equation}
\end{thm}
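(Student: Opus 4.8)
The plan is to derive \eqref{e:observe} from a stationary (resolvent-type) estimate obtained by rescaling Theorem \ref{t:semi-control}, and then to invoke the standard passage from such stationary estimates to observability for the Schr\"odinger group.

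First I would fix a cutoff $\chi\in C_0^\infty(\Omega)$ with $0\le\chi\le1$ and $\chi\not\equiv0$, together with $b\in C_0^\infty(\mathbb R)$ equal to $1$ near $1$, and set $a(x,\xi)=\chi(x)\,b(|\xi|_g^2)$. Then $a\in C_0^\infty(T^\ast M)$ and $a|_{S^\ast M}=\chi\not\equiv0$, so Theorem \ref{t:semi-control} applies. Writing $\lambda=h^{-2}$ (so that $-h^2\Delta-1=\lambda^{-1}(-\Delta-\lambda)$ and $\log(1/h)/h=\tfrac12\sqrt\lambda\log\lambda$), the semiclassical calculus gives $\Op_h(a)=\chi\,\Op_h(b(|\xi|_g^2))+O_{L^2\to L^2}(h)$, and the factor $b(|\xi|_g^2)$ can be removed at the cost of a term controlled by $(-h^2\Delta-1)u$. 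Since $\|\chi u\|_{L^2}\le\|u\|_{L^2(\Omega)}$, \eqref{e:semi-control} then yields, for all $\lambda\ge\lambda_0$ and $u\in H^2(M)$,
\begin{equation}
\label{e:resolvent}
\|u\|_{L^2(M)}\le C\|u\|_{L^2(\Omega)}+C\frac{\log\lambda}{\sqrt\lambda}\|(-\Delta-\lambda)u\|_{L^2(M)}.
\end{equation}
For bounded $\lambda\le\lambda_0$ an estimate of the same shape (with a constant depending on $\lambda_0$) follows from unique continuation, since no $L^2$-eigenfunction of $\Delta$ vanishes on the open set $\Omega$; a compactness--uniqueness argument upgrades this to a uniform constant on the low-frequency part.

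Next I would feed \eqref{e:resolvent} into the standard spectral/control-theoretic machinery relating resolvent estimates to observability for a unitary group with self-adjoint generator (as in \cite{jaffard,haraux,komornik} for the torus, and its abstract versions). Expanding $u_0=\sum_k c_k e_k$ in an orthonormal basis of eigenfunctions $-\Delta e_k=\lambda_k e_k$, one has $e^{it\Delta}u_0=\sum_k c_k e^{-i\lambda_k t}e_k$, and \eqref{e:resolvent} supplies exactly the wave-packet control needed to bound $\|u_0\|^2$ by $\int_0^T\|e^{it\Delta}u_0\|_{L^2(\Omega)}^2\,dt$: the diagonal terms reproduce $T\sum_k|c_k|^2\|e_k\|_{L^2(\Omega)}^2$, each factor bounded below by the $\lambda=\lambda_k$ case of \eqref{e:resolvent}, while the cross terms are absorbed by the standard almost-orthogonality argument, using that \eqref{e:resolvent} gives uniform observability of spectral clusters of width $\sqrt\lambda/\log\lambda$.

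The main obstacle is precisely this last step: because a hyperbolic surface has no spectral gaps, one cannot separate the eigenvalues as in the torus case and must instead exploit the full strength of the quasimode estimate \eqref{e:resolvent} to handle near-degenerate clusters. The key favorable feature is that the loss in \eqref{e:resolvent} is only $\log\lambda/\sqrt\lambda=o(1)$; this sub-power decay is what makes the stationary estimate strong enough to yield observability for every $T>0$, and it is the point at which the logarithmic---rather than polynomial---loss in Theorem \ref{t:semi-control} is essential. The remaining points, namely justifying the frequency-localization errors in deriving \eqref{e:resolvent} and assembling the low- and high-frequency contributions, are routine.
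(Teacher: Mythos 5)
Your derivation of the stationary estimate $\|u\|_{L^2(M)}\le C\|u\|_{L^2(\Omega)}+C\lambda^{-1/2}\log\lambda\,\|(-\Delta-\lambda)u\|_{L^2(M)}$ from Theorem \ref{t:semi-control} is correct, and your overall skeleton (high-frequency stationary estimate, plus unique continuation and compactness--uniqueness at low frequency) matches the paper's. The genuine gap is the step you yourself flag as the main obstacle: the passage from the stationary estimate to the fixed-time bound \eqref{e:observe}. The mechanism you describe --- expand $u_0$ in eigenfunctions, keep the diagonal $T\sum_k|c_k|^2\|e_k\|_{L^2(\Omega)}^2$ (each factor is indeed bounded below by the $\lambda=\lambda_k$ case of your estimate), and absorb the cross terms by ``the standard almost-orthogonality argument'' --- is the Ingham-type argument of \cite{jaffard,haraux,komornik}, and it fails here for exactly the reason you note in the same paragraph: averaging over $t\in[0,T]$ only suppresses cross terms between eigenvalues separated by $\gtrsim 1/T$, and by the Weyl law a hyperbolic surface has $\sim c\Vol(M)$ eigenvalues per unit spectral interval, so there are no gaps and the near-diagonal cross terms are not small. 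Nor does ``uniform observability of spectral clusters of width $\sqrt\lambda/\log\lambda$'' repair this by itself: however the spectrum is partitioned into such clusters, eigenvalues on opposite sides of a cluster boundary can be arbitrarily close, so between-cluster terms are again not almost orthogonal at fixed $T$. Asserting that the quasimode estimate ``supplies exactly the wave-packet control needed'' names the difficulty without resolving it.

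What actually closes the gap --- and is the content of the paper's Propositions \ref{p:semi-observe} and \ref{p:observe-error}, equivalently of the abstract black-box theorem \cite[Theorem 4]{bz04} or the resolvent-to-observability (Hautus-test) theorems of Ramdani--Takahashi--Tenenbaum--Tucsnak and of Miller, any of which you could have cited to make the step rigorous --- is a Fourier transform in \emph{time} rather than an eigenvalue-by-eigenvalue expansion. For data at frequency $h^{-2}$ one sets $w(t)=\psi(ht)\,e^{ith\Delta}\chi(-h^2\Delta)u_0$, applies the stationary estimate to $\mathcal{F}_h^\ast w(\tau)$ for each $\tau\in[\frac12,2]$, and recombines by Parseval; the commutator error carries the factor $h\log(1/h)$, reflecting that the classical window $[0,T]$ is semiclassical time $T/h\gg\log(1/h)$ --- this is the precise form of your heuristic that the $o(1)$ loss $\log\lambda/\sqrt\lambda$ is what permits arbitrary $T>0$. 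The dyadic frequency blocks are then summed using the exchange $e^{it\Delta}\varphi_k(-\Delta)=\varphi_k(D_t)e^{it\Delta}$ and pseudolocality in $(t,\tau)$, yielding observability up to an $H^{-4}$ error (Proposition \ref{p:observe-error}), which is removed by the Bardos--Lebeau--Rauch uniqueness--compactness argument \cite{blr}, whose input is the unique continuation fact you invoke. With your middle step replaced by (or a citation given to) this time-Fourier argument, your plan becomes a complete proof; as written, the decisive step is missing.
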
 

The following control result for the Schr\"{o}dinger equation then follows immediately by the HUM method of Lions \cite{lions}.
\begin{thm}
\label{t:control}
Let $\Omega\subset M$ be any non-empty open set and $T>0$, then for any $u_0\in L^2(M)$, there exists $f\in L^2((0,T)\times\Omega)$ such that the solution of the equation
\begin{equation}
(i\partial_t+\Delta)u(t,x)=f1_{(0,T)\times\Omega}(t,x),
\quad 
u(0,x)=u_0(x)
\end{equation}
satisfies
\begin{equation}
u(T,x)\equiv0.
\end{equation}
\end{thm}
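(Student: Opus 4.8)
The plan is to deduce exact controllability to zero from the observability estimate \eqref{e:observe} by the classical Hilbert Uniqueness Method, exploiting the duality between observability of the free Schr\"odinger flow and controllability of the driven equation. Since $e^{it\Delta}$ is unitary on $L^2(M)$, Duhamel's formula gives, for a forcing $g$ supported in $(0,T)\times\Omega$,
\[
u(T)=e^{iT\Delta}u_0-i\int_0^T e^{i(T-s)\Delta}g(s)\,ds,
\]
so steering $u_0$ to $u(T)=0$ amounts to solving $\int_0^T e^{-is\Delta}g(s)\,ds=-i\,u_0$ among forcings of the form $g=1_\Omega(\cdots)$.

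The key object is the HUM (Gramian) operator
\[
\Lambda:=\int_0^T e^{-it\Delta}\,1_\Omega\,e^{it\Delta}\,dt,
\]
a bounded, self-adjoint, nonnegative operator on $L^2(M)$. First I would record that, because $1_\Omega$ is the orthogonal projection given by multiplication by the indicator of $\Omega$,
\[
\langle\Lambda v_0,v_0\rangle=\int_0^T\|e^{it\Delta}v_0\|_{L^2(\Omega)}^2\,dt,
\]
so the observability inequality \eqref{e:observe} reads exactly $\langle\Lambda v_0,v_0\rangle\geq K^{-1}\|v_0\|_{L^2}^2$ for all $v_0$. Thus $\Lambda$ is coercive; being in addition bounded and self-adjoint, it is invertible with bounded inverse (Lax--Milgram, or equivalently the spectral theorem together with the lower bound on its spectrum).

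With $\Lambda$ invertible, I would set $v_0:=\Lambda^{-1}u_0$ and take the forcing $g:=-i\,1_\Omega\,e^{it\Delta}\Lambda^{-1}u_0$, with $f$ its restriction to $(0,T)\times\Omega$. Substituting $g$ into the Duhamel identity and factoring out the unitary $e^{iT\Delta}$ yields $u(T)=e^{iT\Delta}(u_0-\Lambda v_0)=0$, as desired. Finally I would check the regularity claim $f\in L^2((0,T)\times\Omega)$: since $e^{it\Delta}$ preserves the $L^2(M)$ norm, $\|f\|_{L^2((0,T)\times\Omega)}\le\|g\|_{L^2((0,T)\times M)}=\sqrt{T}\,\|\Lambda^{-1}u_0\|_{L^2}<\infty$.

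There is no genuine analytic obstacle remaining: the entire difficulty of the problem is concentrated in the observability estimate \eqref{e:observe}, and the step above is a purely functional-analytic consequence. The only point demanding a line of care is the passage from coercivity to surjectivity of $\Lambda$ (so that $v_0=\Lambda^{-1}u_0$ makes sense for every target $u_0\in L^2$), which is where self-adjointness enters. I would remark, moreover, that the time-reversibility and unitarity of the Schr\"odinger group allow one to upgrade controllability to zero into exact controllability between arbitrary states in time $T$, should that be desired.
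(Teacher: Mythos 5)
Your proof is correct, and it is the Hilbert Uniqueness Method, just as in the paper --- but you implement it constructively via the Gramian, whereas the paper argues by abstract duality. The paper introduces $Rg=u|_{t=0}$ (solving backwards from $u|_{t=T}=0$, equation \eqref{e:defr}) and $Su_0=e^{it\Delta}u_0|_{(0,T)\times\Omega}$, proves the identity $R^\ast=-iS$ by integration by parts (equation \eqref{e:duality}), and then invokes the standard functional-analytic equivalence between surjectivity of $R$ and the lower bound $\|Su_0\|\geq c\|u_0\|$, which is exactly \eqref{e:observe}; no explicit control is exhibited. You instead form $\Lambda=\int_0^T e^{-it\Delta}1_\Omega e^{it\Delta}\,dt$, which is precisely $S^\ast S$, observe that \eqref{e:observe} is the coercivity $\langle\Lambda v,v\rangle\geq K^{-1}\|v\|_{L^2(M)}^2$, invert $\Lambda$ by Lax--Milgram or the spectral theorem, and verify by a direct Duhamel computation that $f=-i\,1_\Omega e^{it\Delta}\Lambda^{-1}u_0$ steers $u_0$ to zero; your sign bookkeeping checks out, since $u(T)=e^{iT\Delta}\bigl(u_0-\Lambda\Lambda^{-1}u_0\bigr)=0$. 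What your route buys: an explicit (in fact norm-minimal) control together with the quantitative bound $\|f\|_{L^2((0,T)\times\Omega)}\leq\sqrt{T}\,K\,\|u_0\|_{L^2(M)}$, since coercivity gives $\|\Lambda^{-1}\|\leq K$, and no appeal to the closed-range theorem. What the paper's route buys: the clean operator identity $R^\ast=-iS$, which yields the equivalence of control and observability in both directions at once and isolates the adjoint computation in a single integration by parts. Both arguments hinge identically on \eqref{e:observe}, where all the genuine analysis of the problem resides, so the two proofs are interchangeable packagings of the same duality.
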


\begin{rem}
In fact, by an elementary perturbation argument, it is not hard to see that Theorem \ref{t:semi-control} still holds if we replace the Laplacian operator $-\Delta$ by a general Schr\"{o}dinger operator $-\Delta+V$ with $V\in L^\infty(M;\mathbb{R})$. Following the proof, we can also replace $-\Delta$ by $-\Delta+V$ in Theorem \ref{t:observe} and \ref{t:control}. It is interesting to ask if this can be further extended to $L^2$-potentials as in the case of two-dimensional tori \cite{bbz}. Another interesting question is to extend the result to rough control sets as in \cite{bz17}.
\end{rem}

\subsection{Control for Schr\"{o}dinger equations}

In general, the pioneering work of Lebeau \cite{lebeau} showed that control for Schr\"{o}dinger equation holds under the geometric control condition (see \cite{blr}):
\begin{equation}
\label{e:geocontrol}
\text{There exists } L=L(M,\Omega)>0 \text{ s.t. every geodesic of length } L \text{ on } M \text{ intersects } \Omega.
\end{equation}
This geometric control condition is necessary when the geodesic flow is periodic (e.g. $M$ is a sphere), see Macia \cite{macia}. However in general, it is not necessary for observability and control for Schr\"{o}dinger equation. In fact, Theorem \ref{t:observe} and Theorem \ref{t:control} show that no condition is needed for the nonempty open set $\Omega$ on a compact hyperbolic surface. 

To our best knowledge, the only other manifold on which this is true is the flat torus. This is first proved by Jaffard \cite{jaffard} and Haraux \cite{haraux} in dimension two and by Komornik \cite{komornik} in higher dimensions. These results are further extended to Schr\"{o}dinger operators $-\Delta+V$ with smooth potential $V$ by Burq--Zworski \cite{bz12}, and $L^2$-potential $V$ by Bourgain--Burq--Zworski \cite{bbz} in dimension two; some class of potentials $V$ including continuous ones by Anantharaman--Macia \cite{am} in any dimension. We also mention the recent result of Burq--Zworski \cite{bz17} on control on two-dimensional tori by any $L^4$ functions or sets with positive measures. 

For certain other manifolds, observability and control for Schr\"{o}dinger equation is known under weaker dynamical conditions. For example, Anantharaman-Rivi\`{e}re \cite{ar} proved the case where $M$ is a manifold with negative sectional curvature and $\Omega$ satisfies an entropy condition, i.e. the set of uncontrolled trajectories is thin. A similar dynamical condition appears in the work of Schenck \cite{schenck} on energy decay of wave equation on such manifolds. In the case of manifolds with boundary, Anantharaman--L\'{e}autaud--Macia \cite{alm} showed the control and observability for Schr\"{o}dinger equation on the disk by any nonempty open set touching the boundary. 

\subsection{Control for wave equation}
For control of wave equation, by propagation of singularities, the geometric control condition \eqref{e:geocontrol} is necessary and sufficient, see Bardos--Lebeau--Rauch \cite{blr} and Burq--Gerard \cite{bg}.

We remark that the same argument as in Proposition \ref{p:semi-observe} (or the abstract result in Burq--Zworski \cite{bz04}) gives the following semiclassical observability result from Theorem \ref{t:semi-control}.
\begin{prop}
\label{p:semi-observe-wave}
Let $\chi\in C_0^\infty((\frac{1}{2},2))$, then there exists $C$, $K$ and $h_0>0$ such that for all $0<h<h_0$, $u_0\in L^2(M)$, we have
\begin{equation}
\label{e:semi-observe-wave}
\|\chi(h\sqrt{-\Delta})u_0\|_{L^2(M)}^2\leq \frac{K}{C\log(1/h)}\int_0^{C\log(1/h)}\|e^{it\sqrt{-\Delta}}\chi(h\sqrt{-\Delta})u_0\|_{L^2(\Omega)}^2dt.
\end{equation}
\end{prop}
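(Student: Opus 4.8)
The plan is to follow the abstract scheme used for Proposition~\ref{p:semi-observe} (see also Burq--Zworski~\cite{bz04}), which upgrades the stationary control estimate of Theorem~\ref{t:semi-control} to a time-integrated observability estimate; the logarithmic loss $\log(1/h)/h$ in \eqref{e:semi-control} is exactly compensated by taking the observation time $T_h=C\log(1/h)$. Write $w=\chi(h\sqrt{-\Delta})u_0$ and $v(t)=e^{it\sqrt{-\Delta}}w$; since $e^{it\sqrt{-\Delta}}$ commutes with every spectral projection of $-\Delta$, each $v(t)$ stays in the window $\sqrt{-\Delta}\in(\tfrac1{2h},\tfrac2h)$. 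I fix once and for all a symbol $a\in C_0^\infty(T^*M)$ with $a|_{S^*M}\not\equiv0$ whose base projection satisfies $\supp_x a\Subset\Omega$; by pseudolocality the associated operator essentially maps into and out of $\Omega$, so that $\|\Op_{h'}(a)\phi\|_{L^2(M)}\le C\|\phi\|_{L^2(\Omega)}+O(h'^\infty)\|\phi\|_{L^2(M)}$ for every $\phi$ and every scale $h'\sim h$. This is the operator that converts the microlocal observation in \eqref{e:semi-control} into the spatial observation on $\Omega$.

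The core step is a per-window estimate. I decompose the frequency range $\sqrt{-\Delta}\in(\tfrac1{2h},\tfrac2h)$ into $\sim\log(1/h)/h$ disjoint windows $I_k$ of width $\delta=c/\log(1/h)$ centered at $\lambda_k$, let $\Pi_k$ be the corresponding spectral projections, and set $w_k=\Pi_k w$, $v_k(t)=e^{it\sqrt{-\Delta}}w_k=\Pi_k v(t)$. Applying Theorem~\ref{t:semi-control} with $h$ replaced by $h_k=1/\lambda_k$ to $v_k(t)$, and using that on $I_k$ one has $\|(-h_k^2\Delta-1)v_k(t)\|\le C\delta h_k\|v_k(t)\|$, the error term is bounded by $C\delta\log(1/h_k)\|v_k(t)\|\le\tfrac12\|v_k(t)\|$ once $c$ is small (here $\log(1/h_k)\asymp\log(1/h)$ uniformly in $k$, since $\lambda_k\sim1/h$). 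Absorbing it and squaring gives $\|v_k(t)\|^2\le 4C^2\|\Op_{h_k}(a)v_k(t)\|^2$; integrating over $t\in[0,T_h]$, using $\|v_k(t)\|=\|w_k\|$ and the support property of $a$, yields
\begin{equation}
T_h\,\|w_k\|_{L^2(M)}^2\ \le\ C'\int_0^{T_h}\|v_k(t)\|_{L^2(\Omega)}^2\,dt\ +\ O(h^\infty)\|w_k\|_{L^2(M)}^2 .
\end{equation}

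It remains to sum in $k$ and return to $v$. As the $w_k$ are orthogonal, $\sum_k\|w_k\|_{L^2(M)}^2=\|w\|_{L^2(M)}^2$, so summing the previous display gives $T_h\|w\|_{L^2(M)}^2\lesssim\sum_k\int_0^{T_h}\|v_k(t)\|_{L^2(\Omega)}^2\,dt$ up to a negligible $O(h^\infty)$ term. Then \eqref{e:semi-observe-wave}, with $K\asymp C'C''$, follows from the almost-orthogonality bound
\begin{equation}
\sum_k\int_0^{T_h}\|v_k(t)\|_{L^2(\Omega)}^2\,dt\ \le\ C''\int_0^{T_h}\Big\|\sum_k v_k(t)\Big\|_{L^2(\Omega)}^2\,dt\ =\ C''\int_0^{T_h}\|v(t)\|_{L^2(\Omega)}^2\,dt .
\end{equation}

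This almost-orthogonality estimate is where I expect the real work to be. For each fixed $x$ the traces $t\mapsto v_k(t,x)$ have time-Fourier transforms supported in the disjoint bands $I_k$, so the off-diagonal terms $\int_0^{T_h}v_k(t,x)\overline{v_l(t,x)}\,dt$ carry only the oscillatory factors $\int_0^{T_h}e^{it(\lambda_j-\lambda_i)}\,dt$ with $\lambda_j\in I_k$, $\lambda_i\in I_l$, which are $O(|\lambda_j-\lambda_i|^{-1})$ once $k\neq l$. The essential feature is that the observation time $T_h\gtrsim1/\delta$ is matched to the band width $\delta$, so that after Cauchy--Schwarz in $x\in\Omega$ a Schur-test / large-sieve (Ingham-type) argument should show that the off-diagonal contributions are dominated by a fixed fraction of the diagonal; handling the near-diagonal (adjacent) windows correctly is the delicate part of this bookkeeping. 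I stress that no geometric control condition enters at any stage: all of the hyperbolic dynamics is already packaged inside Theorem~\ref{t:semi-control} (via the estimate of \cite{messupp}), and the steps above are a soft, geometry-independent conversion of that stationary estimate into observability on the Ehrenfest time scale $\log(1/h)$.
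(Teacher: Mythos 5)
Your first half is sound, and in fact slightly sharper than it needs to be: once you localize to a spectral window of width $\delta=c/\log(1/h)$, the estimate \eqref{e:semi-control} applied with $h_k=1/\lambda_k$ is \emph{stationary} — it holds at each fixed time $t$ with the error term absorbed, exactly as you say. The fatal step is the recombination: the almost-orthogonality inequality
\begin{equation*}
\sum_k\int_0^{T_h}\|v_k(t)\|_{L^2(\Omega)}^2\,dt\ \le\ C''\int_0^{T_h}\Big\|\sum_k v_k(t)\Big\|_{L^2(\Omega)}^2\,dt
\end{equation*}
is \emph{false} as a statement about band-limited functions, and no Schur test or large-sieve argument can prove it. Your windows tile $(\tfrac1{2h},\tfrac2h)$ contiguously, so there is no gap condition, and every Ingham-type lower bound requires gaps $\gtrsim 1/T$ between the spectral supports (splitting into even/odd windows does not help: the resulting inequality for two interleaved band families is of the same false type). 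Concretely, take two frequencies $\nu_1,\nu_2$ on opposite sides of a window boundary with $|\nu_1-\nu_2|=\epsilon\ll 1/T_h$ and $f(t)=\bigl(e^{i\nu_1 t}-e^{i\nu_2 t}\bigr)g(x)$: then the left-hand side is $\approx 2T_h\|g\|_{L^2(\Omega)}^2$ while $|f(t,x)|\le \epsilon T_h|g(x)|$, so the right-hand side is $O(\epsilon^2T_h^3)\|g\|^2_{L^2(\Omega)}$ — destructive interference between \emph{adjacent} bands kills the inequality, and this is precisely the ``near-diagonal'' case you flagged as delicate. In the actual PDE setting this mechanism is blocked only because near-degenerate eigenfunctions are orthogonal on all of $M$ and their combinations are themselves quasimodes, i.e.\ the step could only be rescued by reinjecting the spectral estimate \eqref{e:semi-control} — it is not a soft, geometry-independent harmonic-analysis lemma, contrary to the way your plan uses it.

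The paper avoids the window decomposition entirely and runs the same Fourier-in-time argument as in Proposition \ref{p:semi-observe} (the remark before the statement says exactly this, citing \cite[Theorem 4]{bz04} with $G(h)=C\log(1/h)$): set $T_h=C\log(1/h)$, put $w(t)=\psi(t/T_h)e^{it\sqrt{-\Delta}}\chi(h\sqrt{-\Delta})u_0$, and take $\mathcal{F}_h^\ast$ in $t$. Since $(hD_t-h\sqrt{-\Delta})w=O(h/T_h)\psi'(t/T_h)v$, each $\mathcal{F}_h^\ast w(\tau)$, $\tau\in[\tfrac12,2]$, is a quasimode for $-h^2\Delta-\tau^2$ of size $O(h/T_h)$ (the factor $h\sqrt{-\Delta}+\tau\asymp 1$ on the spectral support), so \eqref{e:semi-control-scale} produces an error $C\log(1/h)/T_h$, absorbed by taking the constant in $T_h$ large — this is how the logarithmic loss is traded against the logarithmic observation time, rather than against the window width $\delta$ as in your scheme. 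Crucially, the recombination over frequencies is then done by Parseval \eqref{e:parseval}, which is an exact isometry: the continuous Fourier variable $\tau$ replaces your discrete windows, and no almost-orthogonality statement is ever needed. That exactness is precisely what sidesteps the adjacent-band interference that sinks your summation step.
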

However it is unclear to us at the moment whether the HUM method gives a control result for some explicit subspace of $L^2$-functions.

\subsection{Notations}
We recall some notations from semiclassical analysis and refer to the book \cite{semi} for further references. First, the semiclassical Fourier transform on $\mathbb{R}$ is defined by
\begin{equation*}
\mathcal{F}_h\varphi(\tau)=\int_{\mathbb{R}}e^{-it\tau/h}\varphi(t)dt
\end{equation*}
and its adjoint is given by
\begin{equation}
\label{e:semi-fourier}
\mathcal{F}_h^\ast\varphi(\tau)=\int_{\mathbb{R}}e^{it\tau/h}\varphi(t)dt.
\end{equation}
The Parseval identity show that
\begin{equation}
\label{e:parseval}
\|\mathcal{F}_h\varphi\|_{L^2}=\|\mathcal{F}_h^\ast\varphi\|_{L^2}=(2\pi h)^{1/2}\|\varphi\|_{L^2}.
\end{equation}
We also use the standard quantization $a(t,D_t)$ on $\mathbb{R}$ and fix a semiclassical quantization $\Op_h(a)$ on $M$. We refer to \cite{semi} for the standard definition and properties. Finally, as usual, $C$ denotes a constant which may change from line to line.

\subsection*{Acknowledgement}
I would like to thank the referee for many helpful suggestions. I am very grateful to Nicolas Burq, Semyon Dyatlov and Maciej Zworski for the encouragement and discussions on the topic. I would also like to thank BICMR at Peking University, Sun Yat-Sen University and YMSC at Tsinghua University for the hospitality during the visit where parts of the notes were finished.

\section{Proof of the theorems}

All the parts of the proof are well known in the literature. Here we present them in a self-contained way.

\subsection{Semiclassical observability}
We first prove a semiclassical version of the observability result
\begin{prop}
\label{p:semi-observe}
Let $\chi\in C_0^\infty((\frac{1}{2},2))$, $\psi\in C_0^\infty(\mathbb{R};[0,1])$ not identically zero, then there exist $C$, $h_0>0$ such that for all $0<h<h_0$, $u_0\in L^2(M)$, we have
\begin{equation}
\label{e:semi-observe-g}
\|\chi(-h^2\Delta)u_0\|_{L^2(M)}^2\leq C\int_{\mathbb{R}}\|\psi(t)e^{it\Delta}\chi(-h^2\Delta)u_0\|_{L^2(\Omega)}^2dt.
\end{equation}
In particular, for any $T>0$, there exists $C$, $h_0>0$ such that for all $0<h<h_0$, $u_0\in L^2(M)$, we have
\begin{equation}
\label{e:semi-observe}
\|\chi(-h^2\Delta)u_0\|_{L^2(M)}^2\leq C\int_0^T\|e^{it\Delta}\chi(-h^2\Delta)u_0\|_{L^2(\Omega)}^2dt.
\end{equation}
\end{prop}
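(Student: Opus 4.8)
The plan is to pass from the stationary estimate of Theorem \ref{t:semi-control} to the evolution group $e^{it\Delta}$ by means of the semiclassical Fourier transform in time. Write $v=\chi(-h^2\Delta)u_0$ and expand $v=\sum_j v_j e_j$ in an orthonormal eigenbasis $-\Delta e_j=\mu_j e_j$, so that $v_j$ is supported in $\{h^2\mu_j\in\operatorname{supp}\chi\}\subset\{h^2\mu_j\in(1/2,2)\}$ and $e^{it\Delta}v=\sum_j v_j e^{-it\mu_j}e_j$. Setting
\[
w_\tau:=\mathcal F_h^\ast\big(\psi(\cdot)\,e^{i\cdot\Delta}v\big)(\tau)=\sum_j v_j\,\Psi(\tau/h-\mu_j)\,e_j,\qquad \Psi(s):=\int_{\mathbb R}\psi(t)e^{ist}\,dt,
\]
Parseval \eqref{e:parseval}, applied in $t$ for each fixed $x$ and then integrated, gives $\int_{\mathbb R}\|w_\tau\|_{L^2(M)}^2\,d\tau=2\pi h\,\|\psi\|_{L^2}^2\,\|v\|_{L^2(M)}^2$ and $\int_{\mathbb R}\|w_\tau\|_{L^2(\Omega)}^2\,d\tau=2\pi h\int_{\mathbb R}\|\psi(t)e^{it\Delta}v\|_{L^2(\Omega)}^2\,dt$. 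It therefore suffices to bound $\int\|w_\tau\|_{L^2(M)}^2\,d\tau$ by $C\int\|w_\tau\|_{L^2(\Omega)}^2\,d\tau$.

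The point is that each $w_\tau$ is spectrally concentrated, so I can apply Theorem \ref{t:semi-control} to it at the rescaled frequency $\tilde h:=(h/\tau)^{1/2}$, chosen so that the window $-h^2\Delta\approx h\tau$ becomes $-\tilde h^2\Delta\approx 1$. Since $v_j$ lives in $\{h^2\mu_j\in(1/2,2)\}$ and $\Psi$ is Schwartz, the contribution of $\tau$ outside $\{h\tau\in(1/4,3)\}$ is $O(h^\infty)\|v\|^2$ (there $|\tau/h-\mu_j|\gtrsim h^{-2}$); on the remaining range $\theta:=h\tau\in(1/4,3)$ one has $\tilde h\asymp h$, and Theorem \ref{t:semi-control}, applied with $\tilde h$ (its constants being uniform since $\tilde h/h\in(3^{-1/2},2)$), gives
\[
\|w_\tau\|^2\le 2C^2\|\Op_{\tilde h}(a)w_\tau\|^2+2C^2\frac{\log^2(1/\tilde h)}{\tilde h^2}\,\|(-\tilde h^2\Delta-1)w_\tau\|^2 .
\]
With $s=\tau/h-\mu_j$ one computes $\tilde h^2\mu_j-1=-hs/\tau$, so $(-\tilde h^2\Delta-1)w_\tau=\sum_j v_j(\tilde h^2\mu_j-1)\Psi(\tau/h-\mu_j)e_j$ carries the factor $s\Psi(s)$, which is square integrable. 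Substituting $d\tau=h\,ds$ and using $\tau\asymp h^{-1}$ then shows the integrated error term is $O\big(h^3\log^2(1/h)\big)\|v\|^2$, smaller by a factor $h^2\log^2(1/h)\to0$ than the main term $\int\|w_\tau\|^2\,d\tau\asymp h\|v\|^2$. Integrating the displayed inequality over $\{h\tau\in(1/4,3)\}$ and absorbing the error into the left-hand side yields $\pi h\|\psi\|_{L^2}^2\|v\|^2\le 2C^2\int\|\Op_{\tilde h}(a)w_\tau\|^2\,d\tau$.

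It remains to replace $\Op_{\tilde h}(a)$ by observation on $\Omega$. I would choose $a\in C_0^\infty(T^\ast M)$ with $a|_{S^\ast M}\not\equiv0$ and $\pi(\operatorname{supp}a)\subset\Omega$, together with $\phi\in C_0^\infty(\Omega)$ equal to $1$ near $\pi(\operatorname{supp}a)$; pseudolocality gives $\Op_{\tilde h}(a)=\Op_{\tilde h}(a)\phi+O_{L^2\to L^2}(h^\infty)$ uniformly for $\tilde h\asymp h$, whence $\|\Op_{\tilde h}(a)w_\tau\|\le C\|w_\tau\|_{L^2(\Omega)}+O(h^\infty)\|w_\tau\|$. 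Integrating and using the two Parseval identities converts the right-hand side into $Ch\int\|\psi(t)e^{it\Delta}v\|_{L^2(\Omega)}^2\,dt$ up to an $O(h^\infty)\|v\|^2$ error; dividing by $h$ and absorbing the negligible term gives \eqref{e:semi-observe-g}. Finally, for \eqref{e:semi-observe} one takes $\psi\in C_0^\infty((0,T);[0,1])$ not identically zero, so that $\int_{\mathbb R}\|\psi(t)e^{it\Delta}v\|_{L^2(\Omega)}^2\,dt\le\int_0^T\|e^{it\Delta}v\|_{L^2(\Omega)}^2\,dt$.

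The main obstacle I anticipate is the bookkeeping around the $\tau$-dependent rescaling $\tilde h=(h/\tau)^{1/2}$: one must check that the constants in Theorem \ref{t:semi-control} and the pseudolocal bounds are uniform over the compact range $\tilde h/h\in(3^{-1/2},2)$, and, crucially, that the $\frac{\log(1/h)}{h}$-weighted error term is genuinely beaten by the gain from the $\psi$-smoothing. This last point is precisely why the time cutoff must be smooth: it is the integrability of $s\Psi(s)$ that produces the extra factor $h^2\log^2(1/h)$ needed to absorb the error, and making this quantitative comparison precise is the heart of the argument.
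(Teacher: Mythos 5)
Your argument is correct and is essentially the paper's own proof in spectral clothing: both take the semiclassical Fourier transform of the time-cutoff evolution, apply Theorem \ref{t:semi-control} at each frequency $\tau$ in a unit window (your per-$\tau$ rescaling $\tilde h=(h/\tau)^{1/2}$ is exactly the paper's rescaled estimate \eqref{e:semi-control-scale}, modulo the paper's preliminary time substitution $t\mapsto ht$ which places $\tau\in[\frac{1}{2},2]$), discard off-window frequencies via Schwartz decay/nonstationary phase, and absorb an error of relative size $h^2\log^2(1/h)$. Your weight $s\Psi(s)$ is, up to constants, the Fourier transform of the paper's commutator term $\psi'(ht)v$, so the two absorption arguments coincide quantitatively.
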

\begin{proof}
This follows directly from the abstract result in Burq--Zworski \cite[Theorem 4]{bz04} with $G(h)=C\log(1/h)$, $g(h)=C$ and $T(h)=1/h$. We present the argument in this concrete situation.

First, we put $v(t)=e^{ith\Delta}\chi(-h^2\Delta)u_0$ and write $w(t)=\psi(ht)v(t)$. It is clear that $v(t)$ solves the semiclassical Schr\"{o}dinger equation 
$(ih\partial_t+h^2\Delta)v=0$ and thus
$$(ih\partial_t+h^2\Delta)w=ih^2\psi'(ht)v(t).$$
We take the (adjoint) semiclassical Fourier transform \eqref{e:semi-fourier} to get
\begin{equation}
\label{e:fouruer-w}
(-h^2\Delta-\tau)\mathcal{F}_h^{\ast}w(\tau)
=-ih^2\mathcal{F}_h^{\ast}(\psi'(ht)v(t))(\tau).
\end{equation}
For $\tau\in(\frac{1}{2},2)$, we use \eqref{e:semi-control} and choose $a\in C_0^\infty$ to be supported in $\{(x,\xi):x\in\Omega\}$ with $\|a\|_{L^\infty}\leq1$. We then choose $\chi\in C_0^\infty(\Omega;[0,1])$ and regard it also as a function on $T^\ast M$, such that $\chi\equiv1$ on a neighborhood of $\supp a$, then for any $u\in L^2(M)$,
$$\Op_h(a)u=\Op_h(a)(\chi u)+\Op_h(a)(1-\chi)u,$$
where $\Op_h(a)(1-\chi)=\mathcal{O}_{L^2\to L^2}(h^\infty)$, so
$$\|\Op_h(a)u\|_{L^2(M)}\leq C\|\chi u\|_{L^2(M)}+\mathcal{O}(h^\infty)\|u\|_{L^2(M)}.$$
Now \eqref{e:semi-control} gives that, for $0<h<h_0$,
$$\|u\|_{L^2(M)}\leq C\|u\|_{L^2(\Omega)}+C\frac{\log(1/h)}{h}\|(-h^2\Delta-1)u\|_{L^2(M)}.$$
We can further rescale this estimate to show that uniformly for $\tau\in[1/2,2]$,
\begin{equation}
\label{e:semi-control-scale}
\|u\|_{L^2(M)}\leq C\|u\|_{L^2(\Omega)}+C\frac{\log(1/h)}{h}\|(-h^2\Delta-\tau)u\|_{L^2(M)}.
\end{equation}

For $\tau\in[\frac{1}{2},2]$, applying \eqref{e:semi-control-scale} to $u=\mathcal{F}_h^{\ast}w(\tau)$, we obtain
\begin{equation}
\label{e:w-fourier-1}
\|\mathcal{F}_h^{\ast}w(\tau)\|_{L^2(M)}
\leq C\|\mathcal{F}_h^{\ast}w(\tau)\|_{L^2(\Omega)}
+Ch\log(1/h)\|\mathcal{F}_h^{\ast}(\psi'(ht)v(t))(\tau)\|_{L^2(M)}.
\end{equation}

For $\tau\not\in[\frac{1}{2},2]$, by definition,
$$\mathcal{F}_h^{\ast}w(\tau)=\int_{\mathbb{R}}e^{-it(-h^2\Delta-\tau)/h}\psi(ht)\chi(-h^2\Delta)u_0dt.$$
Writing 
$$e^{-it(-h^2\Delta-\tau)/h}=(h^2\Delta+\tau)^{-N}(hD_t)^Ne^{-it(-h^2\Delta-\tau)/h},$$
and noting that for any $u_0\in L^2(M)$, by functional calculus,
$$\|(h^2\Delta+\tau)^{-N}\chi(-h^2\Delta)u_0\|_{L^2(M)}\leq C_N\langle\tau\rangle^{-N}\|\chi(-h^2\Delta)u_0\|_{L^2(M)},$$
we can integrate by parts repeatedly to get
\begin{equation}
\label{e:w-fourier-2}
\|\mathcal{F}_h^{\ast}w(\tau)\|_{L^2(M)}
=\mathcal{O}((h\langle\tau\rangle^{-1})^\infty)\|\chi(-h^2\Delta)u_0\|_{L^2(M)}.
\end{equation}

Combining \eqref{e:w-fourier-1} and \eqref{e:w-fourier-2}, we have the following estimate
\begin{equation*}
\begin{split}
\|\mathcal{F}_h^{\ast}w(\tau)\|_{L^2(\mathbb{R}_\tau,L^2(M))}^2
\leq&\; C\|\mathcal{F}_h^{\ast}w(\tau)\|_{L^2(\mathbb{R}_\tau, L^2(\Omega))}^2\\
&\; +C(h\log(1/h))^2\|\mathcal{F}_h^{\ast}(\psi'(ht)v(t))(\tau)\|_{L^2(\mathbb{R}_\tau, L^2(M))}^2\\
&\;+\mathcal{O}(h^\infty)\|\chi(-h^2\Delta)u_0\|_{L^2(M)}^2.
\end{split}
\end{equation*}
By the Parseval identity \eqref{e:parseval}, we have
\begin{equation}
\label{e:w-control}
\begin{split}
\|w\|_{L^2(\mathbb{R}_t,L^2(M))}^2
\leq &\;C\|w\|_{L^2(\mathbb{R}_t,L^2(\Omega))}^2
+C(h\log(1/h))^2\|\psi'(ht)v(t)\|_{L^2(\mathbb{R}_t,L^2(M))}^2\\
&\;+\mathcal{O}(h^\infty)\|\chi(-h^2\Delta)u_0\|_{L^2(M)}^2.
\end{split}
\end{equation}

From the definition of $v$ and $w$, we see
\begin{equation*}
\begin{split}
\|w\|_{L^2(\mathbb{R}_t,L^2(M))}^2=&\;\int_\mathbb{R}\psi(ht)^2\|e^{ith\Delta}\chi(-h^2\Delta)u_0\|_{L^2(M)}^2dt\\
=&\;\left(\int_\mathbb{R}\psi(ht)^2dt\right)\|\chi(-h^2\Delta)u_0\|_{L^2(M)}^2\\
=&\;h^{-1}\|\psi\|_{L^2(\mathbb{R})}^2\|\chi(-h^2\Delta)u_0\|_{L^2(M)}^2.
\end{split}
\end{equation*}
\begin{equation*}
\begin{split}
\|w\|_{L^2(\mathbb{R}_t,L^2(\Omega))}^2=&\;\int_\mathbb{R}\psi(ht)^2\|e^{ith\Delta}\chi(-h^2\Delta)u_0\|_{L^2(\Omega)}^2dt\\
=&\;h^{-1}\int_{\mathbb{R}}\|\psi(t)e^{it\Delta}\chi(-h^2\Delta)u_0\|_{L^2(\Omega)}^2dt
\end{split}
\end{equation*}
and
\begin{equation*}
\begin{split}
\|\psi'(ht)v(t)\|_{L^2(\mathbb{R}_t,L^2(M))}^2
=&\;\int_\mathbb{R}|\psi'(ht)|^2\|e^{ith\Delta}\chi(-h^2\Delta)u_0\|_{L^2(M)}^2dt\\
=&\;\left(\int_\mathbb{R}|\psi'(ht)|^2dt\right)\|\chi(-h^2\Delta)u_0\|_{L^2(M)}^2\\
=&\;h^{-1}\|\psi'\|_{L^2(\mathbb{R})}^2\|\chi(-h^2\Delta)u_0\|_{L^2(M)}^2.
\end{split}
\end{equation*}
As long as $h$ is small and $\psi\not\equiv0$, we can absorb the last two terms on the right-hand side of \eqref{e:w-control} into the left-hand side and conclude the proof.
\end{proof}

\subsection{Observability with error}
Now we prove Theorem \ref{t:observe} with an error in $H^{-4}(M)$.
\begin{prop}
\label{p:observe-error}
There exists a constant $C>0$ such that for any $u_0\in L^2(M)$, we have
\begin{equation}
\label{e:observe-error}
\|u_0\|_{L^2(M)}^2\leq C\left(\int_0^T\|e^{it\Delta}u_0\|_{L^2(\Omega)}^2dt+\|u_0\|_{H^{-4}(M)}^2\right).
\end{equation}
\end{prop}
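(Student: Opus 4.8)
The plan is to upgrade the semiclassical observability estimate \eqref{e:semi-observe} to the lossy observability \eqref{e:observe-error} by summing it over a dyadic Littlewood--Paley decomposition in the frequency variable, absorbing the resulting low-frequency piece into the $H^{-4}$ error and the spatial-localization errors back into the left-hand side.

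First I fix a Littlewood--Paley function $\chi\in C_0^\infty((\tfrac12,2);[0,1])$ with $\sum_{j\in\mathbb Z}\chi(2^{-j}s)^2=1$ for all $s>0$, and set $h_j=2^{-j/2}$, so that $\chi_j:=\chi(-h_j^2\Delta)=\chi(2^{-j}(-\Delta))$ localizes $-\Delta$ to the dyadic band $(2^{j-1},2^{j+1})$; these bands genuinely overlap, which is what makes a partition of unity possible with support in $(\tfrac12,2)$. For $j\ge N$ (with $N$ large, to be fixed, so that in particular $h_j<h_0$) I apply \eqref{e:semi-observe} to each $\chi_j u_0$ and use that $\chi_j$ commutes with $e^{it\Delta}$ to get
\begin{equation*}
\|\chi_j u_0\|_{L^2(M)}^2\le C\int_0^T\|\chi_j e^{it\Delta}u_0\|_{L^2(\Omega)}^2\,dt .
\end{equation*}
Shrinking $\Omega$ slightly to an open $\Omega'\Subset\Omega$ and choosing $\tilde\phi\in C_0^\infty(\Omega;[0,1])$ with $\tilde\phi\equiv1$ on $\Omega'$, I replace the sharp restriction by a smooth cutoff, $\|\chi_j e^{it\Delta}u_0\|_{L^2(\Omega')}\le\|\tilde\phi\chi_j e^{it\Delta}u_0\|_{L^2(M)}$, so that the commutators appearing later are manageable.

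Summing over $j\ge N$, I control the left-hand side from below using $S:=\sum_{j\ge N}\chi_j^2$, which satisfies $0\le S\le 1$ and $S\equiv1$ on the range of the spectral projection $\Pi$ onto eigenvalues $\ge 2^{N+1}$; hence $\langle Su_0,u_0\rangle=\sum_{j\ge N}\|\chi_j u_0\|_{L^2(M)}^2\ge\|\Pi u_0\|_{L^2(M)}^2$, and since $(1-\Pi)u_0$ lives in a fixed finite-dimensional spectral subspace, $\|(1-\Pi)u_0\|_{L^2(M)}^2\le C_N\|u_0\|_{H^{-4}(M)}^2$, giving $\|u_0\|_{L^2(M)}^2\le\sum_{j\ge N}\|\chi_j u_0\|_{L^2(M)}^2+C_N\|u_0\|_{H^{-4}(M)}^2$. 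On the right-hand side I compare $\sum_{j\ge N}\|\tilde\phi\chi_j g\|^2$ with $\|\tilde\phi g\|^2$ for $g=e^{it\Delta}u_0$. Writing $\tilde\phi\chi_j=\chi_j\tilde\phi+[\tilde\phi,\chi_j]$, using $\sum_{j\ge N}\|\chi_j\tilde\phi g\|^2=\langle S\tilde\phi g,\tilde\phi g\rangle\le\|\tilde\phi g\|^2$ and the semiclassical commutator bound $\|[\tilde\phi,\chi_j]\|_{L^2\to L^2}\le Ch_j$ from \cite{semi}, followed by a Cauchy--Schwarz across the $j$-sum, I obtain
\begin{equation*}
\sum_{j\ge N}\|\tilde\phi\chi_j g\|_{L^2(M)}^2\le\|\tilde\phi g\|_{L^2(M)}^2+C\,2^{-N/2}\|g\|_{L^2(M)}^2 .
\end{equation*}

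The main (and only subtle) point is this last display: the spatial cutoff does not commute with the frequency localization, and each commutator contributes a genuine $L^2(M)$ error rather than a smoothing one. What saves the argument is that the choice $h_j=2^{-j/2}$ makes $\sum_{j\ge N}h_j=\sum_{j\ge N}2^{-j/2}=C\,2^{-N/2}$ a convergent geometric series, so the accumulated commutator error is only a \emph{small} multiple of $\|g\|_{L^2(M)}^2$ once $N$ is large. Integrating in $t$, using $\|\tilde\phi g\|_{L^2(M)}\le\|g\|_{L^2(\Omega)}$ and the unitarity $\|e^{it\Delta}u_0\|_{L^2(M)}=\|u_0\|_{L^2(M)}$, and combining with the lower bound above, I arrive at
\begin{equation*}
\|u_0\|_{L^2(M)}^2\le C\int_0^T\|e^{it\Delta}u_0\|_{L^2(\Omega)}^2\,dt+C\,2^{-N/2}T\,\|u_0\|_{L^2(M)}^2+C_N\|u_0\|_{H^{-4}(M)}^2 .
\end{equation*}
Finally I fix $N$ large enough (depending on $T$) that $C\,2^{-N/2}T\le\tfrac12$ while also $h_N<h_0$; absorbing the middle term into the left-hand side yields \eqref{e:observe-error}, with the final constant depending on $M$, $\Omega$ and $T$ through $N$.
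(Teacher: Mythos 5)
Your proposal is correct, but at the decisive step it takes a genuinely different route from the paper. Both proofs begin identically: dyadic Littlewood--Paley decomposition of $u_0$, the semiclassical observability \eqref{e:semi-observe} applied to each band $\chi(-h_j^2\Delta)u_0$ with $h_j<h_0$, and the low-frequency part estimated by $C_N\|u_0\|_{H^{-4}}^2$. The divergence is in how the band-wise observations are recombined, i.e.\ how one handles the fact that the spectral cutoffs do not commute with restriction to $\Omega$. The paper's proof (following Burq--Zworski) uses the equation to trade the spatial frequency localization for a temporal one, $e^{it\Delta}\varphi_k(-\Delta)u_0=\varphi_k(D_t)e^{it\Delta}u_0$, which commutes \emph{exactly} with multiplication by $1_\Omega(x)$; the only error is the time-pseudolocality term $E_k(t,D_t)$, whose symbol is $\mathcal{O}(2^{-kN})$ for all $N$, so after summation every error lands in the $H^{-4}$ term and no absorption into the left-hand side is needed. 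You instead confront the spatial non-commutativity directly: after shrinking to $\Omega'\Subset\Omega$ and smoothing the cutoff, you use the functional-calculus commutator bound $\|[\tilde\phi,\chi(-h_j^2\Delta)]\|_{L^2\to L^2}\leq Ch_j$, which produces per-band errors of size $h_j\|g\|_{L^2(M)}$ --- not smoothing, merely small --- and your key observation that $\sum_{j\geq N}h_j=C2^{-N/2}$ converges geometrically lets you absorb the accumulated $C2^{-N/2}T\|u_0\|_{L^2}^2$ into the left-hand side by taking $N$ large \emph{after} the constant of Proposition \ref{p:semi-observe} (for $\Omega'$ and $T$) is fixed; there is no circularity, and the weighted Cauchy--Schwarz $\|\tilde\phi\chi_j g\|^2\leq(1+h_j)\|\chi_j\tilde\phi g\|^2+C(1+h_j^{-1})h_j^2\|g\|^2$ summed over $j$ gives exactly your display. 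Each approach buys something: yours avoids the pseudodifferential calculus in the time variable entirely, using only functional calculus and a standard commutator estimate from \cite{semi}, at the price of the shrinking/smooth-cutoff step and a threshold $N$ depending on $T$ and the observability constant; the paper's version keeps the sharp restriction $1_\Omega$ throughout (time-frequency localization commutes with any spatial restriction, however rough) and its errors decay like $\mathcal{O}(2^{-kN})$ rather than $\mathcal{O}(2^{-k/2})$, which is also what makes the stated remark --- that $H^{-4}$ can be replaced by any $H^{-m}$, $m>0$ --- immediate there, whereas in your argument the error exponent is tied to how the errors are split between the $H^{-4}$ term and the absorbed term.
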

\begin{proof}
Again, this argument can be found in Burq--Zworski \cite[Theorem 7]{bz04} or \cite[Proposition 4.1]{bz12}. To pass from the semiclassical observability to the classical one, we use a dyadic decomposition
\begin{equation*}
1=\varphi_0(r)^2+\sum_{k=1}^\infty\varphi_k(r)^2
\end{equation*}
where
\begin{equation*}
\varphi_0\in C_0^\infty((-2,2);[0,1]),\quad
\varphi_k(r)=\varphi(2^{-k}|r|),\quad
\varphi\in C_0^\infty((1/2,2);[0,1]).
\end{equation*}
Then we have
\begin{equation}
\label{e:l2norm}
\|u_0\|_{L^2(M)}^2=\sum_{k=0}^\infty\|\varphi_k(-\Delta)u_0\|_{L^2(M)}^2,
\end{equation}
and
\begin{equation}
\label{e:h2norm}
\|u_0\|_{H^{-4}(M)}^2=\|(-\Delta+1)^{-2}u_0\|_{L^2(M)}^2\sim\sum_{k=0}^\infty2^{-4k}\|\varphi_k(-\Delta)u_0\|_{L^2(M)}^2.
\end{equation}

Fix an integer $K$ so that $2^{-K}<h_0^2$,  then for $k\geq K$, by \eqref{e:semi-observe-g}, we have
\begin{equation}
\label{e:largek}
\|\varphi_k(-\Delta)u_0\|_{L^2(M)}^2\leq C\int_{\mathbb{R}}\|\psi(t)e^{it\Delta}\varphi_k(-\Delta)u_0\|_{L^2(\Omega)}^2dt
\end{equation}
uniformly in $k$ where we choose $\psi\in C_0^\infty((0,T);[0,1])$.

The idea is to use the Schr\"{o}dinger equation to change the frequency localization in space $\varphi_k(-\Delta)$ to frequency localization in time $\varphi_k(D_t)$. More precisely, since $(D_t-\Delta)e^{it\Delta}=0$ and all $\varphi_k$ are even, we have
$$e^{it\Delta}\varphi_k(-\Delta)u_0
=\varphi_k(-\Delta)e^{it\Delta}u_0=
\varphi_k(-D_t)e^{it\Delta}u_0=\varphi_k(D_t)e^{it\Delta}u_0.$$
Now we introduce another cutoff function in time $\widetilde{\psi}\in C_0^\infty((0,T);[0,1])$ such that $\widetilde{\psi}=1$ on a neighborhood of $\supp\psi$. This allows us to express the pseudolocality of $\psi(t)\varphi_k(D_t)$ as follows:
$$\psi(t)\varphi_k(D_t)=\psi(t)\varphi_k(D_t)\widetilde{\psi}(t)+E_k(t,D_t)$$
where $E_k(t,D_t)=\psi(t)[\tilde{\psi}(t),\varphi(2^{-k}D_t)]$ with symbol satisfying
\begin{equation}
\label{e:eksymbol}
\partial^\alpha E_k(t,\tau)=\mathcal{O}(2^{-kN}\langle t\rangle^{-N}\langle\tau\rangle^{-N}),\quad \forall N.
\end{equation}

Now we have
\begin{equation*}
\begin{split}
&\;\|\psi(t)e^{it\Delta}\varphi_k(-\Delta)u_0\|_{L^2(\Omega)}^2
=\|\psi(t)\varphi_k(D_t)e^{it\Delta}u_0\|_{L^2(\Omega)}^2\\
\leq&\; \|\psi(t)\varphi_k(D_t)\widetilde{\psi}(t)e^{it\Delta}u_0\|_{L^2(\Omega)}^2+\|E_k(t,D_t)e^{it\Delta}u_0\|_{L^2(\Omega)}^2
\end{split}
\end{equation*}
Therefore by \eqref{e:l2norm} and \eqref{e:largek}, we get
\begin{equation*}
\begin{split}
\|u_0\|_{L^2(M)}^2
\leq&\; \sum_{k=0}^{K-1}\|\varphi_k(-\Delta)u_0\|_{L^2(M)}^2
+\sum_{k=K}^\infty C\int_{\mathbb{R}}\|\varphi_k(D_t)\widetilde{\psi}(t)e^{it\Delta}u_0\|_{L^2(\Omega)}^2dt\\
&\;\;\;\;\;+\sum_{k=K}^\infty C\int_{\mathbb{R}}\|E_k(t,D_t)e^{it\Delta}u_0\|_{L^2(\Omega)}^2dt.
\end{split}
\end{equation*}
By \eqref{e:h2norm}, we see that the first sum is bounded by $C\|u_0\|_{H^{-4}(M)}^2$. The second sum is bounded by
\begin{equation*}
C\int_{\mathbb{R}}\sum_{k=0}^\infty\langle\varphi_k(D_t)^2\widetilde{\psi}(t)e^{it\Delta}u_0,\widetilde{\psi}(t)e^{it\Delta}u_0\rangle_{L^2(\Omega)}dt
=C\int_{\mathbb{R}}\|\widetilde{\psi}(t)e^{it\Delta}u_0\|_{L^2(\Omega)}^2dt
\end{equation*}
The final sum is bounded by
\begin{equation}
\label{e:finalsum}
C\sum_{k=K}^\infty\int_{\mathbb{R}}\|E_k(t,D_t)e^{it\Delta}u_0\|_{L^2(M)}^2dt
=C\sum_{k=K}^\infty\|E_k(t,D_t)e^{it\Delta}u_0\|_{L^2(\mathbb{R}\times M)}^2
\end{equation}
To show this is also bounded by $C\|u_0\|_{H^{-4}(M)}^2$, we write
\begin{equation*}
E_k(t,D_t)e^{it\Delta}u_0=E_k(t,D_t)(-D_t+1)^2e^{it\Delta}(-\Delta+1)^{-2}u_0
=\widetilde{E}_k(t,D_t)\langle t\rangle^{-2}e^{it\Delta}(-\Delta+1)^{-2}u_0
\end{equation*}
where the symbol of $\widetilde{E}_k(t,D_t)=E_k(t,D_t)(-D_t+1)^2\langle t\rangle^2$ also satisfies \eqref{e:eksymbol} and thus $\widetilde{E}_k(t,D_t)=\mathcal{O}(2^{-k}):L^2(\mathbb{R})\to L^2(\mathbb{R})$. Therefore \eqref{e:finalsum} is bounded by
\begin{equation*}
C\sum_{k=K}^{\infty} 2^{-2k}\|\langle t\rangle^{-2}e^{it\Delta}(-\Delta+1)^{-2}u_0\|_{L^2(\mathbb{R}\times M)}^2\leq C\|(-\Delta+1)^{-2}u_0\|_{L^2(M)}^2=C\|u_0\|_{H^{-4}(M)}^2.
\end{equation*}
This finishes the proof of \eqref{e:observe-error}.
\end{proof}

\begin{rem}
From the proof, it is clear that the $H^{-4}$ error can be replaced by any $H^{-m}$ error as long as $m>0$. Here we take $m=4$ to make the uniqueness-compactness argument in the next section simpler. 
\end{rem}

\subsection{Removing the error}
To finish the proof, we use the classical uniqueness-compactness argument of Bardos--Lebeau--Rauch \cite{blr} to remove the $H^{-4}$ error term. We remark that a quantitative version of the uniqueness-compactness argument is presented in \cite[Appendix A]{bbz} which can be used to remove any $H^{-m}$ error and compute the constant $K$ in \eqref{e:observe} from the constant $C$ in \eqref{e:observe-error} in principle.

For any $T>0$, consider the following closed subspaces of $L^2(M)$:
\begin{equation*}
N_T=\{u_0\in L^2(M): e^{it\Delta}u_0\equiv0 \text{ on } (0,T)\times\Omega\}.
\end{equation*}

\begin{lem}
We have $N_T=\{0\}$.
\end{lem}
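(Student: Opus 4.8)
The plan is to run the classical uniqueness--compactness scheme of Bardos--Lebeau--Rauch \cite{blr}, using Proposition \ref{p:observe-error} to manufacture the needed compactness. First I would note that for $u_0\in N_T$ the observation integral in \eqref{e:observe-error} vanishes, so that estimate collapses to $\|u_0\|_{L^2(M)}^2\leq C\|u_0\|_{H^{-4}(M)}^2$ for every $u_0\in N_T$. Since the inclusion $L^2(M)\hookrightarrow H^{-4}(M)$ is compact, this reversed inequality forces the $L^2$-unit ball of the closed subspace $N_T$ to be compact: an $L^2$-bounded sequence in $N_T$ has an $H^{-4}$-convergent subsequence, which by the inequality is then $L^2$-Cauchy. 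Hence $N_T$ is finite dimensional.

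Next I would show that $N_T$ is invariant under $\Delta$; this is the step I expect to be the main obstacle, since the naive idea of flowing $N_T$ by $e^{is\Delta}$ fails because the time window $(0,T)$ shifts. To get around this I would first observe that the same reasoning gives the analogous estimate for every $T'\in(0,T]$, so each $N_{T'}$ is finite dimensional, and that $N_T=\bigcap_{T'<T}N_{T'}$; being the intersection of a nested decreasing family of finite-dimensional spaces, it stabilizes, i.e. $N_{T'}=N_T$ for all $T'$ in some interval $(T_1,T)$. For $u_0\in N_T$ and small $h>0$ I would then form the difference quotient $w_h=\tfrac1h(e^{ih\Delta}-I)u_0$ and check from $e^{it\Delta}w_h=\tfrac1h\big(e^{i(t+h)\Delta}u_0-e^{it\Delta}u_0\big)$ that $w_h\in N_{T-h}=N_T$ as soon as $0<h<T-T_1$. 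As $h\to0$ we have $w_h\to i\Delta u_0$ in $H^{-2}(M)$; since all the $w_h$ lie in the fixed finite-dimensional space $N_T$, on which the $L^2$ and $H^{-2}$ norms are equivalent, this convergence upgrades to $L^2(M)$, giving $\Delta u_0\in N_T$. Iterating shows $N_T\subset C^\infty(M)$ and that $\Delta$ restricts to a linear endomorphism of $N_T$.

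Finally, assuming for contradiction that $N_T\neq\{0\}$, the operator $\Delta|_{N_T}$ on a nonzero finite-dimensional complex vector space has an eigenvector: there is $\phi\in N_T$, $\phi\neq0$, with $-\Delta\phi=\lambda\phi$. Since $\phi\in N_T$ we get $e^{it\Delta}\phi=e^{-i\lambda t}\phi=0$ on $(0,T)\times\Omega$, so $\phi$ vanishes on the nonempty open set $\Omega$. But $\phi$ is an eigenfunction of the Laplacian on a real-analytic hyperbolic surface, hence real-analytic, and unique continuation forces $\phi\equiv0$ on the connected surface $M$, a contradiction. Therefore $N_T=\{0\}$. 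The only genuinely delicate point is the invariance step, and its resolution rests essentially on the finite dimensionality of $N_T$, which is used both to identify $N_{T-h}$ with $N_T$ and to promote the weak $H^{-2}$ limit of the difference quotients to a limit in $L^2$.
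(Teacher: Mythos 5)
Your proof is correct, and while it follows the same uniqueness--compactness skeleton as the paper (finite dimensionality of $N_T$ via Proposition \ref{p:observe-error} and the compact embedding $L^2(M)\hookrightarrow H^{-4}(M)$, invariance of $N_T$ under $\Delta$, an eigenvector of $\Delta|_{N_T}$, unique continuation), you implement the delicate invariance step by a genuinely different and softer mechanism. The paper proves invariance \emph{before} knowing $\dim N_T<\infty$: it applies \eqref{e:observe-error} (with $T$ replaced by $T/2$) to the differences $v_{\alpha,0}-v_{\beta,0}$ of difference quotients, which lie in $N_{T/2}$ so that the observation term drops, and then checks by an explicit eigenbasis computation that $\bigl|\frac{e^{-i\alpha\lambda_j}-1}{\alpha}-\frac{e^{-i\beta\lambda_j}-1}{\beta}\bigr|\leq|\alpha-\beta|\lambda_j^2$, so the $H^{-4}$ weight $(1+\lambda_j)^{-4}$ exactly absorbs $\lambda_j^4$ and the quotients are Cauchy in $L^2$ --- this quantitative step is precisely why the paper's Remark says $m=4$ was chosen to keep the argument simple. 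You instead establish finite dimensionality of every $N_{T'}$ first, use stabilization of the nested decreasing family $N_{T'}$ (integer-valued, non-increasing dimensions) to identify $N_{T-h}$ with $N_T$ for small $h>0$, and then upgrade the easy $H^{-2}$ convergence $w_h\to i\Delta u_0$ (dominated convergence in the eigenbasis) to $L^2$ convergence by norm equivalence on the fixed finite-dimensional space, which is closed in $H^{-4}$, hence contains the limit. Your route buys robustness: it needs no quantitative Cauchy estimate and works verbatim with an $H^{-m}$ error for any $m>0$, confirming the paper's remark that the exponent $4$ is inessential; the paper's route buys a self-contained Lipschitz estimate in the parameter, which avoids the stabilization trick and is the form amenable to the quantitative version of the argument cited from \cite[Appendix A]{bbz}. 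The supporting details you flag all check out: $N_T=\bigcap_{T'<T}N_{T'}$, $w_h\in N_{T-h}$ by the shifted-window computation, and Proposition \ref{p:observe-error} indeed holds for every $T'>0$ (the paper itself invokes it at time $T/2$).
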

\begin{proof}
In fact, if $u_0\in N_T$, then
\begin{equation*}
v_{\varepsilon,0}:=\frac{1}{\varepsilon}(e^{i\epsilon\Delta}-I)u_0\in N_{T-\delta}
\end{equation*}
if $\varepsilon\leq\delta$. Moreover, $v_{\varepsilon,0}$ converges to $v_0=i\Delta u_0$ in $L^2(M)$. To see this, we only need to show that $v_{\varepsilon,0}$ is a Cauchy sequence in $L^2(M)$. We write the orthonormal expansion of $u_0$ in terms of the Laplacian eigenfunctions 
\begin{equation*}
u_0=\sum_{j=0}^\infty u_{0,j}e_j,
\end{equation*}
where $\{e_j\}_{j=0}^\infty$ is an orthonormal basis of $L^2(M)$ formed by Laplacian eigenfunctions:
\begin{equation*}
-\Delta e_j=\lambda_je_j,\quad \|e_j\|_{L^2(M)}=1, \quad
0=\lambda_0<\lambda_1\leq\lambda_2\leq\cdots\leq\lambda_j\leq\cdots,
\quad \lambda_j\nearrow\infty.
\end{equation*}
Then for $\alpha,\beta\in(0,T/2)$, we have by \eqref{e:observe-error} (with $T$ replaced by $T/2$),
\begin{equation*}
\begin{split}
\|v_{\alpha,0}-v_{\beta,0}\|_{L^2}^2
\leq&\; C\|v_{\alpha,0}-v_{\beta,0}\|_{H^{-4}}^2\\
\leq&\; C\sum_{j=1}^\infty \left|\frac{e^{-i\alpha\lambda_j}-1}{\alpha}-\frac{e^{-i\beta\lambda_j}-1}{\beta}\right|^2(1+\lambda_j)^{-4}|u_{0,j}|^2\\
\leq&\; C\sum_{j=1}^\infty|\alpha-\beta|^2\lambda_j^4(1+\lambda_j)^{-4}|u_{0,j}|^2\leq C|\alpha-\beta|^2\|u\|_{L^2(M)}^2.
\end{split}
\end{equation*}

Now $v_0=i\Delta u_0\in N_{T-\delta}$ for any $\delta>0$, thus also in $N_T$. As a consequence, $N_T$ is an invariant subspace of $\Delta$ in $L^2(M)$. Also, by Proposition \ref{p:observe-error}, the $H^{-4}(M)$-norm is equivalent to the $L^2(M)$-norm on $N_T$, so the unit ball in $N_T$ is compact and thus $N_T$ is of finite dimension. If it is not $\{0\}$, then it must contain some Laplacian eigenfunction $\varphi$. But this would mean that $\varphi\equiv0$ on $\Omega$, which violates the unique continuation for Laplacian eigenfunctions. Therefore $N_T=\{0\}$.
\end{proof} 

Now we can proceed by contradiction to finish the proof of Theorem \ref{t:observe}. Suppose \eqref{e:observe} is not true, then we can find a sequence $\{u_{n,0}\}$ in $L^2(M)$ such that
\begin{equation}
\|u_{n,0}\|_{L^2(M)}=1,\quad \text{and} \quad 
\int_0^T\|e^{it\Delta}u_{n,0}\|_{L^2(\Omega)}^2dt\leq n^{-1}.
\end{equation}
Then we can extract a subsequence $u_{n_k,0}$ converging to $u_0$ weakly in $L^2(M)$, thus strongly in $H^{-4}(M)$. On one hand, by Proposition \ref{p:observe-error} again, we see
\begin{equation*}
1=\|u_{n_k,0}\|_{L^2(M)}^2
\leq C\int_0^T\|e^{it\Delta}u_{n_k,0}\|_{L^2(\Omega)}^2dt+C\|u_{n_k,0}\|_{H^{-4}(M)}^2\leq Cn_k^{-1}+C\|u_{n_k,0}\|_{H^{-4}(M)}^2
\end{equation*}
and thus let $k\to\infty$, we get $\|u_0\|_{H^{-4}(M)}\geq C^{-1/2}>0$. On the other hand, $u_0$ must lie in $N_T$ and thus $u_0\equiv0$. This contradiction finishes the proof of Theorem \ref{t:observe}.

\subsection{From observability to control: Hilbert Uniqueness Method (HUM)}
Now we recall how the Hilbert Uniqueness Method of Lions \cite{lions} shows that Theorem \ref{t:observe} implies Theorem \ref{t:control}.

Consider the following operators $R:L^2([0,T]\times\Omega)\to L^2(M)$ and $S:L^2(M)\to L^2([0,T]\times\Omega))$ defined by $Rg=u|_{t=0}$ where $u$ is the solution to
\begin{equation}
\label{e:defr}
(i\partial_t+\Delta)u=g1_{[0,T]\times\Omega},
\quad u|_{t=T}\equiv0
\end{equation}
and $Su_0=e^{it\Delta}u_0|_{[0,T]\times\Omega}$.

\begin{prop}
$R$ and $S$ are continuous and $R^\ast=-iS$, i.e. for any $g\in L^2((0,T)\times\Omega)$ and $u_0\in L^2(M)$,
\begin{equation}
\label{e:duality}
\langle Rg,u_0\rangle_{L^2(M)}=i\langle g,Su_0\rangle_{L^2((0,T)\times\Omega)}.
\end{equation}
In particular, the following statements are equivalent:\\
(a) (Control) $R$ is surjective;\\
(b) (Observability) There exists $c>0$ such that for all $u_0\in L^2(M)$,
\begin{equation}
\|Su_0\|_{L^2((0,T)\times\Omega)}\geq c\|u_0\|_{L^2(M)}.
\end{equation}
\end{prop}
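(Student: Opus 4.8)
The plan is to establish the three assertions in the order stated: boundedness of $R$ and $S$, the duality $R^\ast=-iS$, and finally the equivalence (a)$\Leftrightarrow$(b), which is a purely functional-analytic consequence of the duality. First I would make $R$ explicit. Extending $g$ by zero off $\Omega$ gives a right-hand side $F(s,\cdot)=g(s,\cdot)1_\Omega\in L^1((0,T);L^2(M))$, and since $e^{it\Delta}$ is a unitary group on $L^2(M)$ the Duhamel (variation of parameters) formula solves $(i\partial_t+\Delta)u=F$ with $u|_{t=T}=0$ as
$$u(t)=i\int_t^T e^{i(t-s)\Delta}\bigl(g(s,\cdot)1_\Omega\bigr)\,ds,\qquad Rg=u(0)=i\int_0^T e^{-is\Delta}\bigl(g(s,\cdot)1_\Omega\bigr)\,ds,$$
so that $u\in C([0,T];L^2(M))$ and $u(0)$ is well defined. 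Boundedness of both operators is then immediate from unitarity: for $S$ one has $\|Su_0\|_{L^2((0,T)\times\Omega)}^2=\int_0^T\|e^{it\Delta}u_0\|_{L^2(\Omega)}^2\,dt\le T\|u_0\|_{L^2(M)}^2$, while for $R$ the triangle inequality together with Cauchy--Schwarz in $s$ gives $\|Rg\|_{L^2(M)}\le\int_0^T\|g(s,\cdot)1_\Omega\|_{L^2(M)}\,ds\le\sqrt{T}\,\|g\|_{L^2((0,T)\times\Omega)}$. Thus $\|R\|,\|S\|\le\sqrt{T}$.

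Next I would verify the duality \eqref{e:duality}. Pairing the explicit formula for $Rg$ against $u_0$, pulling the scalar $i$ out of the first slot, and moving the unitary across via $(e^{-is\Delta})^\ast=e^{is\Delta}$ gives
$$\langle Rg,u_0\rangle_{L^2(M)}=i\int_0^T\langle g(s,\cdot)1_\Omega,\,e^{is\Delta}u_0\rangle_{L^2(M)}\,ds.$$
Since $g(s,\cdot)1_\Omega$ is supported in $\Omega$, each pairing restricts to $\Omega$ and equals $\langle g(s,\cdot),(Su_0)(s)\rangle_{L^2(\Omega)}$; integrating in $s$ yields exactly $i\langle g,Su_0\rangle_{L^2((0,T)\times\Omega)}$, which is \eqref{e:duality}. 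Reading this as $\langle Rg,u_0\rangle=\langle g,R^\ast u_0\rangle$, with the inner product conjugate-linear in the second slot, identifies $R^\ast u_0=-iSu_0$, i.e. $R^\ast=-iS$ (equivalently $S=iR^\ast$).

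Finally, for the equivalence I would invoke the standard dictionary between surjectivity and a lower bound on the adjoint: a bounded operator $R$ between Hilbert spaces is surjective if and only if $R^\ast$ is bounded below, i.e. there is $c>0$ with $\|R^\ast u_0\|\ge c\|u_0\|$ for all $u_0$. This follows from the closed range theorem: a lower bound forces $R^\ast$ to be injective with closed range, hence $R$ has closed range, and injectivity of $R^\ast$ gives $\overline{\operatorname{Range}(R)}=(\ker R^\ast)^\perp=L^2(M)$, so $R$ is onto; the converse comes from the open mapping theorem. Since $\|R^\ast u_0\|=\|{-i}Su_0\|=\|Su_0\|$, the lower bound for $R^\ast$ is verbatim the observability inequality in (b), giving (a)$\Leftrightarrow$(b).

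The routine but sign-sensitive step is the duality computation, where one must correctly track the factor of $i$ together with the conjugation in the second inner-product slot; I would double-check it by recomputing the conjugate pairing $\langle Su_0,g\rangle$ and confirming it reproduces $\overline{\,i\langle g,Su_0\rangle\,}$, which fixes the signs in $R^\ast=-iS$. The only genuinely non-elementary ingredient is the surjectivity $\Leftrightarrow$ bounded-below equivalence, which is standard; the substantive analytic content—the observability bound (b)—has already been supplied by Theorem \ref{t:observe}, so the present statement is essentially a formal repackaging of that estimate via HUM.
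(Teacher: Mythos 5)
Your proposal is correct, and it differs from the paper's proof in the mechanics of the key duality computation. The paper never writes a solution formula: it pairs the equation $(i\partial_t+\Delta)u=g1_{[0,T]\times\Omega}$ directly against $\bar v$ with $v=e^{it\Delta}u_0$ and integrates by parts in $t$ and $x$, so that the boundary term at $t=0$ produces $-i\langle Rg,u_0\rangle_{L^2(M)}$ (the term at $t=T$ vanishing since $u|_{t=T}=0$) while the bulk term dies because $\bar v$ solves the adjoint equation $(-i\partial_t+\Delta)\bar v=0$; continuity of $R$ and $S$ and the functional-analytic equivalence of (a) and (b) are asserted as standard. You instead first derive the explicit Duhamel representation $Rg=i\int_0^T e^{-is\Delta}(g(s,\cdot)1_\Omega)\,ds$ and obtain \eqref{e:duality} by moving the unitary $e^{-is\Delta}$ across the $L^2(M)$ pairing. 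The two computations are of course equivalent, but your route buys something concrete: the Duhamel formula simultaneously delivers the continuity claims with explicit bounds $\|R\|,\|S\|\le\sqrt{T}$ (which the paper's proof leaves implicit) and sidesteps any regularity justification of the space-time integration by parts for a merely $L^2$ source, since everything reduces to unitarity of the group and Fubini. You also usefully spell out the ``standard functional analysis argument'': the closed range theorem dictionary between surjectivity of $R$ and the lower bound $\|R^\ast u_0\|\ge c\|u_0\|$, together with the observation $\|R^\ast u_0\|=\|Su_0\|$, which is exactly what the paper appeals to without proof. Your sign bookkeeping ($\bar c=i$ forcing $R^\ast=-iS$ from the conjugate-linear second slot) is consistent with the paper's convention.
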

\begin{proof}
Let $u$ be the solution to \eqref{e:defr} and $v=e^{it\Delta}u_0$, then integration by parts gives
$$\langle g,Su_0\rangle_{L^2((0,T)\times\Omega)}=\int_{[0,T]\times M} (i\partial_t+\Delta)u\cdot\bar{v}dtdx
=i\int_M \left.u\bar{v}\right|_{t=0}^{t=T}dx+\int_{[0,T]\times M}u\cdot(-i\partial_t+\Delta)\bar{v}dtdx$$
By definition of $R$, we see that
$$i\int_M \left.u\bar{v}\right|_{t=0}^{t=T}dx=-i\langle Rg,u_0\rangle_{L^2(M)}$$
while $(-i\partial_t+\Delta)\bar{v}=0$. This finishes the proof of \eqref{e:duality}. The equivalence of (a) and (b) follows by standard functional analysis argument.
\end{proof}

\def\arXiv#1{\href{http://arxiv.org/abs/#1}{arXiv:#1}}

\end{document}